\numberwithin{equation}{section}	
\newtheorem{theorem}{Theorem}[section]
\newtheorem{lemma}[theorem]{Lemma}
\newtheorem{proposition}[theorem]{Proposition}
\theoremstyle{definition}
\newtheorem{construction}[theorem]{Construction}
\newtheorem{remark}[theorem]{Remark}
\theoremstyle{remark}
\newtheorem*{acknowledgement}{Acknowledgement}
\newcommand{\KK}{\mathbb K}
\newcommand{\PP}{\mathbb P}
\newcommand{\ZZ}{\mathbb Z}
\newcommand{\gq}{/\!\! /}
\title{On the Cox ring of blowing up the diagonal}
\author{Hendrik B\"aker}
\address{Mathematisches Institut,
Universit\"at T\"ubingen,
Auf der Morgenstelle 12,
72076 T\"ubingen,
Germany}
\email{hendrik.baeker@mathematik.uni-tuebingen.de}
\subjclass[2010]{
14C20,\ 
14M25
}
\begin{document}

\begin{abstract}
We compute the Cox rings of the blow-ups $\mathrm{Bl}_\Delta(X'\times X')$ and $\mathrm{Bl}_\Delta(\PP_1^n)$ where $X'$ is a product of projective spaces and $\Delta$ is the (generalised) diagonal.
\end{abstract}

\maketitle

\section{Introduction}
To a complete, normal variety with finitely generated divisor class group one can 
associate its Cox ring. In recent literature it has been discussed how the Cox ring behaves under blow-ups. In particular, it is of interest whether finite generation is preserved in this process, and, if so what a presentation in terms of generators and relations looks like, see for example
\cite{cox_ring_m0134, blowing_up_pn, blowing_up_weighted_pn, computing_cox_rings, collineations, points_on_line}.

In the present note we employ the techniques developed in \cite{on_chow_quotients, computing_cox_rings} to compute the Cox rings of the following blow-ups. Let $X':=\PP_{n_1}\times\ldots\times\PP_{n_{\mathbf r}}$ be a product of projective spaces and denote by ${\Delta_X}\subseteq X:= X'\times X'$ the diagonal. The variety $X$ is spherical and $\mathrm{Bl}_{\Delta_X}(X)$ inherits this property. Hence, it is known that the Cox ring $\mathcal R(\mathrm{Bl}_{\Delta_X}(X))$ is finitely generated, see \cite{cox_rings,spherical_varieties}. Our first result is an explicit presentation.

\begin{theorem}
\label{thm:main1}
The Cox ring $\mathcal R(\mathrm{Bl}_{\Delta_X}(X))$ of the blow-up $\mathrm{Bl}_{\Delta_X}(X)$ is isomorphic to the $\ZZ^{\mathbf r}\times\ZZ^{\mathbf r}\times\ZZ$-graded factor algebra $R_X/I_X$ where
\begin{align*}
 R_X\ :&=\ \KK[T_\infty,\ _rT_{ij};\quad r=1,\ldots, \mathbf r,\quad 0\le i<j\le n_r+2,\quad i\le n_r],\\
 I_X\ :&=\  I(1)+\ldots+I(\mathbf r),
\end{align*}
for every $r=1,\ldots,\mathbf r$ the ideal $I(r)$ is generated by the twisted Pl\"ucker relations
\begin{align*}
  {_rT_{ij}}\ {T_\infty}\ -\ {_rT_{ik}}\ {_rT_{jk}}\ +\  {_rT_{il}}\ {_rT_{jk}};
  &\quad 
  0\le i<j\le n_r,\quad k=n_r+1,\ l= n_r+2,\\
   {_rT_{ij}}\ {_rT_{kl}}\ -\ {_rT_{ik}}\ {_rT_{jk}}\ +\ {_rT_{il}}\ {_rT_{jk}};
  &\quad
  0\le i<j< k<l\le n+2,\quad k\le n_r,
 \end{align*}
and the grading of $R_X/I_X$ is given by
\[
 \deg\, (T_\infty) \ =\ (0,0,1),\qquad 
 \deg\, (_rT_{ij}) \ =\ \begin{cases}
                            (e_r,0,0) &\text{if }j=n_r+1,\\
                            (0,e_r,0)&\text{if }j=n_r+2,\\
                            (e_r,e_r,-1)&\text{else}.
                           \end{cases}
\]

\end{theorem}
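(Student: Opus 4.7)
The plan is to invoke the blow-up algorithm for Cox rings developed in \cite{on_chow_quotients, computing_cox_rings}, applied to the smooth center $\Delta_X \subset X$. The input data is the Cox ring of $X=X'\times X'$, which by the product formula is the polynomial ring $\KK[x^{(r)}_i, y^{(r)}_i]$ with $\ZZ^{\mathbf r}\times\ZZ^{\mathbf r}$-grading $\deg(x^{(r)}_i)=(e_r,0)$, $\deg(y^{(r)}_i)=(0,e_r)$, together with the ideal of $\Delta_X$, which is generated by the $2\times 2$ minors $p^{(r)}_{ij}:=x^{(r)}_i y^{(r)}_j - x^{(r)}_j y^{(r)}_i$ for $r=1,\ldots,\mathbf r$ and $0\le i<j\le n_r$.

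Let $\pi\colon\mathrm{Bl}_{\Delta_X}(X)\to X$ denote the blow-up. I would then introduce one variable $T_\infty$ for the canonical section of the exceptional divisor $E$, identify $x^{(r)}_i, y^{(r)}_i$ with $_rT_{i,n_r+1}, {}_rT_{i,n_r+2}$ (interpreted as proper transforms of hyperplane pullbacks), and, for each ideal generator $p^{(r)}_{ij}$ with $0\le i<j\le n_r$, a new variable $_rT_{ij}$ realised as the section cutting out the proper transform of $\{p^{(r)}_{ij}=0\}$. The defining identity $p^{(r)}_{ij} = T_\infty\cdot{}_rT_{ij}$ rewrites as
\[
 {}_rT_{ij}\,T_\infty\ -\ {}_rT_{i,n_r+1}\,{}_rT_{j,n_r+2}\ +\ {}_rT_{i,n_r+2}\,{}_rT_{j,n_r+1}\ =\ 0,
\]
which is the first family of Pl\"ucker relations in $I(r)$. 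Substituting the same identity into the classical Pl\"ucker syzygies $p^{(r)}_{ij}p^{(r)}_{kl} - p^{(r)}_{ik}p^{(r)}_{jl} + p^{(r)}_{il}p^{(r)}_{jk}=0$ on the minors and cancelling the common factor $T_\infty^2$ produces the second family when all four indices lie in $[0,n_r]$; the mixed cases with one of the indices equal to $n_r+1$ or $n_r+2$ arise by the same substitution but with fewer factors of $T_\infty$ to cancel, and amount to the Laplace expansion identities for $3\times 3$ minors of the extended $2\times(n_r+3)$ matrix whose last two columns are the standard basis. Since the minors for distinct $r$ involve disjoint variables, no cross-factor syzygies appear, which explains the decomposition $I_X = I(1)+\ldots+I(\mathbf r)$. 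The grading is then forced: $\mathrm{div}({}_rT_{ij}) = \pi^{*}\mathrm{div}(p^{(r)}_{ij}) - E$ yields the class $(e_r,e_r,-1)$, while $T_\infty$ has the class $[E]=(0,0,1)$.

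The main obstacle is establishing that this candidate presentation is complete, i.e.\ that $R_X/I_X$ is the full Cox ring rather than a proper subalgebra. Following \cite{computing_cox_rings}, I would verify that $I_X$ is prime via a Gr\"obner-basis calculation exploiting its factorwise decomposition, then apply the termination criterion of the algorithm, which reduces to checking that the GIT quotient of $\mathrm{Spec}(R_X/I_X)$ with respect to a suitable ample class reproduces $\mathrm{Bl}_{\Delta_X}(X)$. The a priori finite generation coming from the spherical structure of $X$, noted in the introduction, guarantees that no further generators are needed and that this verification terminates.
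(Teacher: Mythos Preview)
Your setup is correct and matches the paper's: the generators, the relations, and the grading are all identified in the right way, and the explanation of how the twisted Pl\"ucker relations arise from the syzygies among the $2\times 2$ minors is accurate. The gap is in the verification step.

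You propose to check that $I_X$ is prime ``via a Gr\"obner-basis calculation'' and then to confirm the presentation by comparing a GIT quotient of $\mathrm{Spec}(R_X/I_X)$ with $\mathrm{Bl}_{\Delta_X}(X)$. Both of these are computational procedures that apply to a single fixed tuple $(\mathbf r;n_1,\ldots,n_{\mathbf r})$; they do not establish the result for all values of the parameters, which is what the theorem asserts. The ``factorwise decomposition'' does not rescue this, because the ideals $I(r)$ are not in disjoint variables: they all share $T_\infty$, so primality of each $I(r)$ separately does not immediately give primality of their sum. Finally, the appeal to sphericality is a non-sequitur here: knowing abstractly that $\mathcal R(\mathrm{Bl}_{\Delta_X}(X))$ is finitely generated does not tell you that \emph{these particular} elements generate it, nor does it make the algorithmic termination criterion of \cite{computing_cox_rings} go through for a parametrised family.

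The paper replaces the computational step by a structural argument that works uniformly. First, it does not verify the full GIT-quotient comparison; instead it uses the criterion of Proposition~\ref{pro:cox_crit}, which says that $R_X/I_X$ is the Cox ring as soon as $T_\infty$ is a prime element of $R_X/I_X$ not dividing any other variable. So the whole problem reduces to primality of $I_X$ (which gives primality of $T_\infty$ once one knows $I_X+\langle T_\infty\rangle$ is prime) together with the easy degree check. Second, primality of $I_X$ is obtained without any Gr\"obner computation via a Nakayama-type trick (Lemma~\ref{lem:radprime_prime}): if $I_X+\langle T_\infty\rangle$ is prime and $V(I_X)$ is irreducible, then $I_X$ itself is prime. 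The first hypothesis is exactly the content of \cite[Proposition~4]{collineations} applied factorwise (after setting $T_\infty=0$ the factors \emph{are} in disjoint variables), and irreducibility of $V(I_X)$ is read off geometrically from the transfer construction, since $V(I_X)$ is the closure of the preimage under a torus map with connected kernel of the irreducible graph $V(I_{X,1})$. This is the missing idea in your proposal.
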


In particular, the spectrum of the Cox ring $\mathcal R(\mathrm{Bl}_{\Delta_X}(X))$ is the intersection of a product of affine Grassmannian varieties (w.r.t. the Pl\"ucker embedding) with a linear subspace.

As a second class of examples we treat the (non-spherical) blow-up of the variety $Y:=\PP_1^n$ in the generalised diagonal ${\Delta_Y}:=\{(x,\ldots,x);~x\in\PP_1\}\subseteq Y$. Again we prove that the Cox ring of $\mathrm{Bl}_{\Delta_Y}(Y)$ is finitely generated and we give an explicit presentation. 

\begin{theorem}
\label{thm:main2}
 The Cox ring $\mathcal R(\mathrm{Bl}_{\Delta_Y}(Y))$ of the blow-up $\mathrm{Bl}_{\Delta_Y}(Y)$ is isomorphic to the $\ZZ^{n+1}$-graded factor algebra $R_Y/I_Y$ where
 \begin{align*}
  R_Y\quad &:=\quad
  \KK[S_{ij};~1\le i<j\le n+2]\\
  I_Y\quad &:=\quad
  \langle\, S_{ij}S_{kl}\ -\ S_{ik}S_{jl}\ +\ S_{il}S_{jk};\quad 1\le i<j<k<l\le n+2\,\rangle,
 \end{align*}
 and the grading of $R_Y/I_Y$ is given by
\[
 \deg\, (S_{ij}) \ =\ \begin{cases}
			    e_{i} &\text{if }i\le n,\,j=n+1,\, n+2,\\
			    e_{n+1}&\text{if }i=n+1,\, j=n+2,\\
                            e_{i}+e_{j}-e_{n+1}&\text{else}.
                           \end{cases}
\]
\end{theorem}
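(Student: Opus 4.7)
The plan is to apply the algorithmic framework of \cite{on_chow_quotients, computing_cox_rings} to the projection $\pi\colon\mathrm{Bl}_{\Delta_Y}(Y)\to Y$, starting from the known Cox ring of the toric ambient $Y=\PP_1^n$.

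First I set up the natural candidate generators. The Cox ring of $Y$ is the $\ZZ^n$-graded polynomial ring $\KK[x_1,y_1,\ldots,x_n,y_n]$ with $\deg(x_i)=\deg(y_i)=e_i$, and the ideal of the generalised diagonal $\Delta_Y$ is generated by the $2\times 2$ minors $x_iy_j-x_jy_i$ for $1\le i<j\le n$. The divisor class group of $\mathrm{Bl}_{\Delta_Y}(Y)$ is $\mathrm{Cl}(Y)\oplus\ZZ[E]\cong\ZZ^{n+1}$. As candidate generators I take the pullbacks $S_{i,n+1}:=\pi^*x_i$ and $S_{i,n+2}:=\pi^*y_i$ (of degree $e_i$), a canonical section $S_{n+1,n+2}$ of the exceptional divisor $E$ (of degree $e_{n+1}$), and for each pair $1\le i<j\le n$ the section $S_{ij}$ defined by the identity $\pi^*(x_iy_j-x_jy_i)=S_{n+1,n+2}\cdot S_{ij}$. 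Since the listed minors scheme-theoretically cut out $\Delta_Y$, each pulls back vanishing to order exactly one along $E$, so the $S_{ij}$ are regular sections of degree $e_i+e_j-e_{n+1}$, matching the grading stated in the theorem.

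Next I read off the obvious relations. The constructed $S_{ij}$ are precisely the $2\times 2$ Plücker coordinates of the $2\times(n+2)$ matrix
\[
 \begin{pmatrix} x_1 & \cdots & x_n & 1 & 0\\ y_1 & \cdots & y_n & 0 & 1\end{pmatrix},
\]
as one sees by identifying $S_{i,n+1}=x_i$, $S_{i,n+2}=y_i$, $S_{n+1,n+2}=1$ and $S_{ij}=x_iy_j-x_jy_i$ in the localisation at $S_{n+1,n+2}$. Hence the standard Plücker relations $S_{ij}S_{kl}-S_{ik}S_{jl}+S_{il}S_{jk}$ hold identically in $\mathcal R(\mathrm{Bl}_{\Delta_Y}(Y))$, supplying all generators of $I_Y$.

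The main step, and the principal obstacle, is to prove that these generators already generate the Cox ring and that $I_Y$ is the full ideal of relations. For this I would apply the recognition criterion of \cite{on_chow_quotients, computing_cox_rings}: view $\mathrm{Spec}(R_Y/I_Y)$ equipped with the $(\KK^*)^{n+1}$-action induced by its grading as a candidate total coordinate space, check that it is normal, irreducible, and factorially $\ZZ^{n+1}$-graded with only constant homogeneous units (which reduces to the corresponding well-known properties of the affine cone over $\mathrm{Gr}(2,n+2)$), and then identify, for a suitable GIT chamber, the resulting quotient with $\mathrm{Bl}_{\Delta_Y}(Y)$ in such a way that the orbit closures of the generators $S_{ij}$ recover the exceptional divisor, the strict transforms of the toric coordinate divisors of $Y$, and the strict transforms of the minor divisors. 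I expect the delicate part to be the chamber analysis, in particular verifying that there are no further Cox ring generators hiding in higher-degree pieces of $\pi_*\mathcal O_{\mathrm{Bl}_{\Delta_Y}(Y)}(D)$ beyond those dictated by the $\Delta_Y$-adic filtration on $\mathcal R(Y)$.
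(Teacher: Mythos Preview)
Your setup is essentially the same as the paper's: the same candidate generators $S_{ij}$, the same identification of the Pl\"ucker relations, and the same appeal to the machinery of \cite{on_chow_quotients, computing_cox_rings}. Where you diverge is in the completion step, and this is where your proposal has a real gap.

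You propose to verify the general Cox-ring characterisation (normality, factorial grading, constant homogeneous units, plus a GIT identification of the quotient with $\mathrm{Bl}_{\Delta_Y}(Y)$) and you explicitly flag the chamber analysis and the exclusion of higher-degree generators as the ``delicate part''. The paper bypasses all of this. It invokes a much sharper criterion from the same references (stated here as Proposition~\ref{pro:cox_crit}): after performing the ideal transfer of Construction~\ref{con:transfer_ideals}, one only has to check that the single new variable $S_{n+1,n+2}$ is prime in $R_Y/I_Y$ and divides none of the other variables. Since $I_Y$ is the Pl\"ucker ideal of $\mathrm{Gr}(2,n+2)$, both facts are immediate. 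The only nontrivial computation is showing that the transferred ideal $I'_{Y,2}$ agrees with $\langle\iota^*_{Y,2}I_Y\rangle$ on the torus, which the paper handles with a short syzygy lemma (Lemma~\ref{lem:more_plueckers}) among Pl\"ucker relations; primality of $I_Y$ then gives the contraction for free. In short, the criterion you should be citing is not the abstract recognition theorem but the ``new variable is prime'' test, which eliminates precisely the chamber analysis you are worried about.
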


\begin{acknowledgement}
I would like to thank J\"urgen Hausen for helpful discussions and suggestions.
\end{acknowledgement}

\section{Proofs of Theorems~\ref{thm:main1} and \ref{thm:main2}}

Let us recall some definitions, for details see \cite{cox_rings}. Let $Z$ be a normal variety with free and finitely generated divisor class group $K:=\mathrm{Cl}(Z)$ and only constant invertible regular functions. Then we define its {\it Cox ring} as the $K$-graded $\KK$-algebra
\[
 \mathcal{R}(Z) 
\quad :=\quad
\bigoplus_{K} \Gamma(Z,\mathcal{O}(D)).
\]
If the Cox ring $\mathcal R(Z)$ is finitely generated, we call its spectrum $\overline Z:=\mathrm{Spec}(\mathcal R(Z))$ the {\it total coordinate space} of $Z$. The $K$-grading of $\mathcal R(Z)$ gives rise to an action of the quasitorus $H_Z:=\mathrm{Spec}(\KK[K])$ on $\overline Z$. Moreover, there exists an open invariant subset, the {\it characteristic space}, $\hat Z\subseteq\overline Z$ admitting a good quotient $p_Z\colon \hat Z\to \hat Z\gq H_Z\cong Z$ for this action.

Before we enter the proofs we will sketch the methods developed in \cite{on_chow_quotients, computing_cox_rings}. Let $Z$ be a toric 
variety with Cox construction $p_{Z}\colon\hat Z\to Z$, total coordinate space $\overline Z=\KK^{r}$ and an ample class $w\in K$ in the divisor class group. 

Now let $\mathfrak A$ be a subscheme of $Z$; we ask for the Cox ring of the blow-up $\mathrm{Bl}_{\mathfrak A}(Z)$ of $Z$ in $\mathfrak A$. By Cox' construction \cite{homogeneous_coordinate_ring} the subscheme $\mathfrak A$ arises from a homogeneous ideal $\mathfrak a=\langle f_1,\ldots,f_l\rangle$ in the $K$-graded Cox ring $\mathcal R(Z)$. For this consider the associated $K$-graded sheaf $\tilde{\mathfrak a}$ on $\overline Z$; then $\mathfrak A$ is given by $({p_Z}_*\tilde{\mathfrak a})_0$. As a first step we embedd $Z$ into a larger toric variety $Z_1$ such that the blow-up can be dealt with using methods from toric geometry. For this we consider the closed embedding
\[
 \overline \pi\colon\KK^{r}\quad \to\quad \KK^{r_1};
 \qquad 
 z\quad\mapsto\quad (z,f_1(z),\ldots,f_l(z)),
\]
where $r_1:=r+l$. We endow $\KK[T_1,\ldots,T_{r_1}]$ with a grading of $K_1:=K$ by assigning to $T_1,\ldots,T_r$ the original $K$-degrees and setting $\deg(T_{r+i}):=\deg f_i$ for the remaining variables. Then the quasitorus $H_{Z_1}:=\mathrm{Spec}(\KK[K_1])$ acts on the affine space $\overline Z_1:=\KK^{r_1}$ and this makes $\overline \pi$ equivariant. The class $w\in K_1$ gives rise to an open subset $\hat Z_1\subseteq\overline Z_1$ and a toric variety $Z_1:=\hat Z_1\gq H_{Z_1}$.

The closed embedding $\overline \pi$ restricts to a closed embedding $\hat\pi\colon\hat Z\to\hat Z_1$ of the corresponding characteristic spaces and then descends to a closed embedding $\pi\colon Z\to Z_1$ of the respective quotients. The setting fits into the following commutative diagram.

\[
 \xymatrix{
 {\overline Z}
 \ar[rr]^{\overline\pi}
 &&
 {\overline Z_1}
 \\
 {\hat Z}
 \ar[rr]^{\hat\pi}
 \ar[d]^{\gq H_Z}
 \ar[u]
 &&
 {\hat Z_1}
 \ar[d]^{\gq H_{Z_1}}
 \ar[u]
 \\
 Z
 \ar[rr]^\pi
 &&
 Z_1
 }
\]

The idea is to compute the Cox ring of the proper transform $Z'$ of $Z\subseteq Z_1$ with respect to a toric blow-up of $Z_1$. The following lemma relates $Z'$ to the blow-up $\mathrm{Bl}_\mathfrak A(Z)$. Although the result was to be expected, we do not know of a reference and provide a proof.

\begin{lemma}
\label{lem:blow_vs_cox}
Let $\mathfrak b\subseteq\mathcal O(\overline Z_1)$ be a $K_1$-homogeneous ideal and let $\mathfrak B$ be the corresponding subscheme of $Z_1$. Then the proper transform of $Z\subseteq Z_1$ under the blow-up $\mathrm{Bl}_{\mathfrak B}(Z_1)\to Z_1$ is isomorphic to the blow-up of $Z$ in the subscheme of $Z$ associated to the $K$-homogeneous ideal $\overline \pi^*\mathfrak b\subseteq\mathcal O(\overline Z)$. 
\end{lemma}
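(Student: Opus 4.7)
The plan is to decompose the statement into two essentially separate ingredients: a universal-property description of proper transforms under blow-ups, and the naturality of the Cox construction with respect to the closed embedding $\overline\pi$.

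For the first ingredient, I would invoke the following well-known fact: for a closed immersion $\iota\colon W\hookrightarrow V$ of schemes and a closed subscheme $\mathfrak B\subseteq V$ with $W\not\subseteq\mathfrak B$, the proper transform of $W$ under $\mathrm{Bl}_{\mathfrak B}(V)\to V$ is canonically isomorphic to $\mathrm{Bl}_{\iota^{-1}\mathfrak B}(W)$, where $\iota^{-1}\mathfrak B$ denotes the scheme-theoretic preimage. This is a standard application of the universal property of blow-ups: both sides represent the universal $W$-scheme on which the pullback of the ideal sheaf of $\mathfrak B$ becomes invertible. Applied with $\iota=\pi$, this reduces the lemma to the assertion that $\pi^{-1}\mathfrak B\subseteq Z$ coincides with the subscheme of $Z$ associated to the $K$-homogeneous ideal $\overline\pi^*\mathfrak b$.

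For the second ingredient, I would chase the commutative diagram displayed above. Under the Cox construction, a $K_1$-homogeneous ideal $\mathfrak b\subseteq\mathcal O(\overline Z_1)$ yields the subscheme $\mathfrak B\subseteq Z_1$ by first restricting $V(\mathfrak b)\subseteq\overline Z_1$ to the characteristic space $\hat Z_1$ and then descending along the good quotient $p_{Z_1}\colon\hat Z_1\to Z_1$. Since $\overline\pi$ is a closed embedding, since $\hat\pi$ is its restriction to the characteristic spaces, and since the actions of $H_Z=H_{Z_1}$ are compatible by the very construction of the grading on $\overline Z_1$ (assigning $\deg T_{r+i}=\deg f_i$), pulling back ideals commutes both with the restriction to the characteristic spaces and with descent along the respective good quotients. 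Consequently, $\pi^{-1}\mathfrak B$ descends from $\hat\pi^{-1}\bigl(V(\mathfrak b)\cap\hat Z_1\bigr)=V(\overline\pi^*\mathfrak b)\cap\hat Z$, which by definition is the subscheme of $Z$ associated to $\overline\pi^*\mathfrak b$.

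The main technical obstacle I expect lies in the second step: one must verify rigorously that scheme-theoretic preimages commute with both translation steps (restriction to the characteristic space and descent along the good quotient) used to pass between homogeneous ideals on the total coordinate spaces and actual subschemes. The first commutation follows from $\hat\pi$ being cut out inside $\hat Z_1$ by the same equations defining $\overline\pi$ inside $\overline Z_1$, while the second uses that scheme-theoretic preimages along equivariant morphisms descend through good quasitorus quotients. Once this naturality is in place, combining it with the universal property of strict transforms yields the claimed isomorphism.
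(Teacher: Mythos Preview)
Your proposal is correct and shares its core idea with the paper's proof: both reduce to the fact that the proper transform of a closed subscheme under a blow-up is the blow-up along the scheme-theoretic preimage of the center, and then identify that preimage with the subscheme of $Z$ coming from $\overline\pi^*\mathfrak b$. The difference is organizational. You argue globally, invoking the universal property of blow-ups once and then appealing to an abstract naturality statement (pullback of ideal sheaves commutes with restriction to the characteristic space and with descent along the good quotient). The paper instead works locally: it covers $\hat Z_1$ by principal affine opens $\overline{Z_1}_\gamma$ indexed by a combinatorial set $\Gamma'$, factors $\overline\pi$ as an isomorphism onto $V(I_1)$ followed by the inclusion, and on each chart reduces the naturality statement to the concrete equality of ideals $\pi'^*(\iota^*\mathfrak b_{(0)}) = (\overline\pi^*\mathfrak b)_{(0)}$ inside the degree-zero part $A_{(0)}$. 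That equality is immediate once one observes that a graded surjection sends the degree-zero part of a homogeneous ideal onto the degree-zero part of its image. Your abstract descent claim, if unpacked, becomes exactly this chartwise verification; the paper's route has the advantage of making the one nontrivial step completely explicit, while yours is cleaner to state and separates the two ingredients more sharply.
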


\begin{remark}
 If we apply Lemma~\ref{lem:blow_vs_cox} in the case $\mathfrak b:=\langle\, T_{r+1},\ldots,T_{r_1}\,\rangle$, then we obtain $\mathfrak a=\overline\pi^*\mathfrak b$ and the proper transform $Z'$ of $Z\subseteq Z_1$ is the blow-up of $Z$ in $\mathfrak A$. Moreover, if $\mathfrak a$ is prime, then the associated subscheme $\mathfrak A$ is the subvariety $p_Z(V(\mathfrak a))$ and $Z'$ is the ordinary blow-up of $Z$ in $p_Z(V(\mathfrak a))$.
\end{remark}

\begin{proof}[Proof of Lemma~\ref{lem:blow_vs_cox}]
First blow-ups are determined locally. We consider a suitable partial open cover of $\hat Z_1$ and of the characteristic space $\hat Z$. Let $w\in K=K_1$ be an ample class of $Z$ as above. We set
\[
 \Gamma:=\big\{
 \gamma\in \{0,1\}^{r_1};\quad
 w\ \in\ \mathrm{relint}\,(\, 
 \mathrm{cone}\left(\deg T_i;~\text{where }\gamma_i=1\right)
 \,)
 \big\}.
\]
Then $\hat Z_1$ is covered by the $H_{Z_1}$-invariant sets ${\overline Z_1}_\gamma\,:=\,{\overline Z_1}\setminus V(T^\gamma)$ where $\gamma\in\Gamma$. We now determine a partial cover which already contains $\overline \pi(\hat Z)$. For this we consider the subset 
\[
 \Gamma'\ :=\ \Gamma\cap(\{0,1\}^r\times\{0\}^l)\ \subseteq\ \Gamma.
\]
Then the corresponding open subvarieties cover the image of $\overline \pi$. More precisely, if we set $\overline Z_\gamma\ :=\ \overline \pi^{-1}({\overline Z_1}_\gamma)$, then we have 
\[
 \hat Z=\bigcup_{\gamma \in\Gamma'}\overline Z_\gamma
 \quad\text{and hence}\quad
 \overline \pi(\hat Z)\subseteq\bigcup_{\gamma\in\Gamma'}{\overline Z_1}_\gamma.
\]
Moreover, we denote by $Z_\gamma\ :=\ \overline Z_\gamma\gq H_Z$ and ${Z_1}_\gamma\ :=\ {\overline Z_1}_\gamma\gq H_{Z_1}$ the respective quotient spaces and fix some $\gamma\in\Gamma'$. If we set $I_1\subseteq \KK[T_1,\ldots,T_{r_1}]$ as the ideal generated by all the $T_{r+i}-f_i$, then the image of $\overline \pi$ is given by $V(I_1)$. The morphism $\overline \pi$ factors into an isomorphism $\overline\pi'\colon \overline Z\to V(I_1)$ and a closed embedding $\iota\colon V(I_1)\to\overline Z_1$.

On the algebraic side we set $A:=\mathcal O(\overline Z)$ and $B:=\mathcal O(\overline Z_1)$. We write $B_\gamma,\,(B/I)_\gamma$ and $A_\gamma$ for the localised algebras and $B_{(0)},\, (B/I)_{(0)}$ and $A_{(0)}$ for their respective homogeneous components of degree zero. Then the situation fits into the following commutative diagrams.
\[
\xymatrix{
{\overline Z}
\ar[r]^(.5){\overline \pi'}
&
{V(I_1)}
\ar[r]^(.5){\overline\iota}
&
{\overline Z_1}
\\
{\overline Z_\gamma}
\ar[r]^(.3){\pi'_\gamma}
\ar[u]
\ar[d]
&
{V(I_1)\cap {\overline Z_1}_\gamma}
\ar[u]
\ar[d]
\ar[r]^(.6){\iota_\gamma}
&
{\overline{Z_1}_\gamma}
\ar[u]
\ar[d]
\\
{Z_\gamma}
\ar[r]^(.5){\pi'}
&
{Z_{\gamma}}
\ar[r]^(.5){\iota}
&
{{Z_1}_\gamma}
}
\qquad\quad
\xymatrix{
  A
  \ar[d]
  &
  {B/I_1}
  \ar@{<->}[l]_(.55){\overline \pi'^*}
  \ar[d]
  &
  B
  \ar@{->>}[l]_(.38){\overline \iota^*}
  \ar[d]
  \\
  {A_\gamma}
  &
  {(B/I_1)_\gamma}
  \ar@{<->}[l]_(.55){{\pi'}_\gamma^*}
  &
  {B_\gamma}
  \ar@{->>}[l]_(.38){\iota_\gamma^*}
  \\
  {A_{(0)}}
  \ar[u]
  &
  {(B/I_1)_{(0)}}
  \ar[u]
  \ar@{<->}[l]_(.55){\pi'^*}
  &
  {B_{(0)}}
  \ar@{->>}[l]_(.38){\iota^*}
  \ar[u]
  }
 \]
The proper transform of $Z_\gamma\subseteq {Z_1}_\gamma$ is the blow-up of $Z_\gamma$ with center given by the affine scheme associated to the ideal $\iota^*\mathfrak b_{(0)}\subseteq (B/I_1)_{(0)}$. Our assertion then follows from the fact that in $A_{(0)}$ the ideals $\pi'^*(\iota^*\mathfrak b_{(0)})$ and $(\overline \pi^*\mathfrak b)_{(0)}$ coincide.
\end{proof}

\begin{construction}
 \label{con:transfer_ideals}
Let $\mathfrak b$ be the ideal $\langle\,T_{r+1},\ldots,T_{r_1}\,\rangle$ and $Z'\to Z$ the proper transform of $Z\subseteq Z_1$ with respect to the toric blow-up $\mathrm{Bl}_{\mathfrak B}(Z_1)\to Z_1$. We turn to the problem of determining the Cox ring $\mathcal R(Z')$.
For this we set $r_2:=r_1+1$ and consider the $r_1\times r_2$-matrix 
\[
A:=[E_{r_1},\mathbf 1_l],
 \qquad\text{where}\quad
 \mathbf 1_l:=(\underbrace{0,\ldots,0}_{r},\underbrace{1\ldots, 1}_{l})^t.
\]
The dual map $A^* \colon \ZZ^{r_1} \to \ZZ^{r_2}$ yields a homomorphism $\alpha^*$ of group algebras and a morphism $\alpha\colon (\KK^*)^{r_2}\to(\KK^*)^{r_1}$. Together with the canonical embeddings $\iota_1^*$ and $\iota_2^*$ we now have transfer the ideal 
\[
 I_1\ :=\ \langle\, T_{r+i}-f_i;~i=1,\ldots,l\,\rangle\quad\subseteq\quad\KK[T_1,\ldots,T_{r_1}]
\]
by taking extensions and contractions via the construction
\[ 
\xymatrix{
I_1
\ar[d]
&
{I_1'\,:=\,\langle\, \iota_1^*I_1\,\rangle}
\ar[d]
&
{I_2'\,:=\,\langle\, \alpha^*I'_1\,\rangle}
\ar[d]
&
{I_2\,:=\,{\iota_2^*}^{-1}I_2'}
\ar[d]
\\
{\KK[T_1, \ldots, T_{r_1}]}
\ar[r]^-{\iota^*_1}
&
{\KK[T_1^{\pm 1}, \ldots, T_{r_1}^{\pm 1}]}
\ar[r]^-{\alpha^*}
&
{\KK[T_1^{\pm 1}, \ldots, T_{r_2}^{\pm 1}]}
&
{\KK[T_1, \ldots, T_{r_2}]}
\ar[l]_-{\iota^*_2}
}
\]
and call the resulting ideal $I_2$. If we endow $\KK[T_1,\ldots,T_{r_2}]$ with the grading of $K_2:=K_1\times \ZZ$ given by 
\[
 \deg (T_i):=\begin{cases}
              (\deg_{K_1}(T_i),0)&\text{for }1\le i\le r,\\
              (\deg_{K_1}(T_i),-1)&\text{for }r+1\le i\le r_1,\\
              (0,1)&\text{for }i=r_2,
             \end{cases}
\]
then $I_2$ is $K_2$-homogeneous and the following Proposition provides us with a criterion to show that $I_2$ defines the desired Cox ring.
\end{construction}

\begin{proposition}[{\cite[Proposition 3.3,~Corollary 3.4]{on_chow_quotients}}]
\label{pro:cox_crit}
If in the $K_2$-graded ring $R_2:= \KK[T_1,\ldots,T_{r_2}]/I_2$ the variable $T_{r_2}$ is prime and does not divide a $T_i$ with 
$1\le i \le r_1$, then $R_2$ is  Cox ring of the proper transform $Z'$.
\end{proposition}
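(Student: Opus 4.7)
The plan is to identify $Z_2 := \mathrm{Bl}_{\mathfrak B}(Z_1)$ as a toric variety whose Cox ring is the polynomial ring $\KK[T_1,\ldots,T_{r_2}]$ with the $K_2$-grading of Construction~\ref{con:transfer_ideals}, show that $I_2$ is the vanishing ideal of the proper transform $Z' \subseteq Z_2$ in its total coordinate space, and then apply a factoriality criterion for Cox rings to conclude $R_2 = \mathcal R(Z')$. For the toric step, the matrix $A = [E_{r_1}, \mathbf 1_l]$ is precisely the ray map of the star-subdivision of the fan of $Z_1$ along the cone generated by the rays corresponding to $T_{r+1},\ldots,T_{r_1}$; this star-subdivision realises the toric blow-up of $Z_1$ along $\mathfrak B$. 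The Cox construction of $Z_2$ then has total coordinate space $\KK^{r_2}$ with class group $K_2 = K_1 \times \ZZ$, and the variable $T_{r_2}$ corresponds to the exceptional divisor.

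Next, I would verify that $I_2$ cuts out the proper transform $Z'$ inside $\overline{Z_2}$. Since $Z_2 \to Z_1$ is an isomorphism over $Z_1 \setminus \mathfrak B$, which lifts to the complement of $V(T_{r_2})$ in the total coordinate space, the three successive operations $\iota_1^*$ (passing to the common torus by localising at all $T_i$), $\alpha^*$ (pulling back via the fan map), and ${\iota_2^*}^{-1}$ (contraction back to the polynomial ring) compute the schematic closure in $\overline{Z_2}$ of the preimage of $Z \cap (Z_1 \setminus \mathfrak B)$, and this closure is by definition the proper transform $Z'$.

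Finally, I would invoke the standard criterion for Cox rings of closed subvarieties of toric varieties: if $Z' \subseteq Z_2$ is a normal irreducible closed subvariety meeting the big torus, and the images of the Cox ring generators of $Z_2$ in $\mathcal O(\overline{Z'}) = R_2$ are pairwise non-associated $K_2$-prime elements, then $R_2$ is the Cox ring of $Z'$. The hypothesis that $T_{r_2}$ is prime in $R_2$ gives the $K_2$-primality of the exceptional generator, while the hypothesis $T_{r_2} \nmid T_i$ for $i \le r_1$ prevents the coordinate classes from becoming associated to the exceptional one after restriction. Primality and non-associatedness of $T_1,\ldots,T_{r_1}$ among themselves is inherited from the Cox ring $\mathcal R(Z)$ via the birational identification $Z' \setminus E \cong Z \setminus \mathfrak A$. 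The main obstacle is to translate the algebraic conditions in $R_2$ into the divisor-theoretic statement that each $T_i$ restricts to a canonical section of a prime divisor on $Z'$; once this translation is achieved, the criterion applies and identifies $R_2$ with $\mathcal R(Z')$.
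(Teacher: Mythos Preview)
The paper does not prove this proposition; it is quoted verbatim from \cite[Proposition~3.3, Corollary~3.4]{on_chow_quotients} and used as a black box. There is therefore no in-paper proof to compare your proposal against.

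That said, your sketch follows the expected line of the cited argument: realise the toric blow-up $Z_2$ via the stellar subdivision encoded by $A=[E_{r_1},\mathbf 1_l]$, identify $I_2$ as the closure ideal of the proper transform in $\overline{Z_2}$, and then invoke a Cox-ring criterion for embedded subvarieties. One point to be careful about: you write that primality and non-associatedness of $T_1,\ldots,T_{r_1}$ in $R_2$ is ``inherited from the Cox ring $\mathcal R(Z)$'', but $\mathcal R(Z)$ only accounts for $T_1,\ldots,T_r$; the variables $T_{r+1},\ldots,T_{r_1}$ arise from the stretching and need not define prime divisors on $Z'$. The actual criterion in \cite{on_chow_quotients} is more delicate than the plain ``all generators are pairwise non-associated $K$-primes'' version---it is tailored to the modification situation and only requires the stated hypotheses on the single exceptional variable $T_{r_2}$. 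Your outline captures the geometry correctly, but the algebraic endgame would need the specific formulation from the cited source rather than the generic subvariety criterion.
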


We return to our two cases of $X=X'\times X'$ and $Y=\PP_1^n$. Both of them are toric varieties, their respective Cox rings are polynomial rings and the total coordinate spaces are
\[
  \overline X
 \ =\ 
\bigoplus_{r=1}^\mathbf r \,(\,{\KK^{n_r+1}\oplus \KK^{n_r+1}}\,)
 \qquad\text{and}\qquad
\overline Y
\ =\ 
\underbrace{\KK^{2}\oplus\ldots\oplus\KK^2}_{n}.
\]
On $\overline X$ we will label the coordinates of the $r$-th factor with $_rT_{ij}$ where $i=0,\ldots,n_r$ and $j=n_r+1,n_r+2$. On $\overline Y$ we will use the notation $S_{ij}$ for the coordinates where similarly $i=1,\ldots,n$ and $j=n+1,n+2$.

The first step is to determine generators for the vanishing ideals of the generalised diagonals $\Delta_X$ and $\Delta_Y$ in the respective Cox rings, i.e. the ideals 
\[
 \mathfrak a_X
 \ :=\ 
 I(p_{X}^{-1}(\Delta_{X}))
 \ \subseteq\ 
 \mathcal O(\overline X)
 \qquad\text{and}\qquad
 \mathfrak a_Y
 \ :=\ 
 I(p_{Y}^{-1}(\Delta_{Y}))
 \ \subseteq\ 
 \mathcal O(\overline Y).
\]

\begin{lemma}
\label{lem:vanishing_ideals}
As above let $\mathfrak a_X$ and $\mathfrak a_Y$ be the ideals of the generalised diagonals $\Delta_X$ and $\Delta_Y$ in the respective Cox rings. Both of them are prime and they are generated by the following elements.
\begin{enumerate}
 \item The ideal $\mathfrak a_X$ is generated by the $2\times 2$-minors of the matrices
\[
  \left[
  \begin{array}{cccc}
   _rT_{0,n_r+1}
   &
    _rT_{1,n_r+1}
    &
    \cdots
    &
     _rT_{n_r,n_r+1}
     \\
    _rT_{0,n_r+2}
   &
    _rT_{1,n_r+2}
    &
    \cdots
    &
     _rT_{n_r,n_r+2} 
  \end{array}
  \right],
   \quad
  r=1,\ldots,\mathbf r.
 \]
 \item The ideal $\mathfrak a_Y$ is generated by the $2\times 2$-minors of the matrix
 \[
  \left[
  \begin{array}{cccc}
   S_{1,n+1}
   &
    S_{2,n+1}
    &
    \cdots
    &
     S_{n,n+1}
     \\
    S_{1,n+2}
   &
    S_{2,n+2}
    &
    \cdots
    &
     S_{n,n+2} 
  \end{array}
  \right].
 \]

 \end{enumerate}
\end{lemma}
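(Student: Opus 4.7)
My plan is to reduce both parts to the classical fact that the ideal of $2 \times 2$ minors of a generic $2 \times m$ matrix is prime and defines the affine cone over the Segre image of $\PP_1 \times \PP_{m-1}$. Since $X$ and $Y$ are products of toric varieties, their Cox rings split as tensor products and their characteristic spaces as products of the factor characteristic spaces; the quasitorus acts as the product action. Accordingly, for part (i) the diagonal decomposes as $\Delta_X = \prod_r \Delta_r$ with $\Delta_r \subseteq \PP_{n_r} \times \PP_{n_r}$ the ordinary diagonal of the $r$-th factor, so $p_X^{-1}(\Delta_X) = \prod_r p_r^{-1}(\Delta_r)$. Because the corresponding ideals live in pairwise disjoint sets of variables, $\mathfrak a_X$ is their sum, and (over an algebraically closed base field) primality is preserved by such a sum.

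This reduces (i) to a single factor $\PP_n \times \PP_n$. Its characteristic space is $(\KK^{n+1} \setminus \{0\})^2$ with the obvious $(\KK^*)^2$-action, and a point $(a,b)$ projects to the diagonal iff $[a] = [b]$ in $\PP_n$, iff $a$ and $b$ are proportional, iff every $2 \times 2$ minor of the displayed matrix vanishes. The Zariski closure of this rank-one locus in $(\KK^{n+1})^2$ is the affine cone over the Segre image of $\PP_1 \times \PP_n$, which by the classical theory of determinantal ideals is cut out precisely by those $2 \times 2$ minors and is prime.

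Part (ii) is directly analogous: a point $(S_{ij})$ in the characteristic space $\prod_{i=1}^n (\KK^2 \setminus \{0\})$ of $\PP_1^n$ maps to $\Delta_Y$ iff all $n$ column vectors $(S_{i,n+1}, S_{i,n+2})^t$ represent the same point of $\PP_1$, iff they are pairwise proportional, iff the displayed $2 \times n$ matrix has rank at most one. The same determinantal-ideal statement then identifies $\mathfrak a_Y$ with the ideal of its $2 \times 2$ minors and yields primality.

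The only non-routine ingredient is the classical primality of the $2 \times 2$-minor ideal of a generic $2 \times m$ matrix; once that is granted, the remainder is formal bookkeeping of products and disjoint variable sets. I therefore expect no genuine obstacle beyond carefully setting up this reduction.
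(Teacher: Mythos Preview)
Your argument is correct and uses the same key ingredient as the paper---the primality of the ideal of $2\times 2$ minors of a generic $2\times m$ matrix---but the route is genuinely different. The paper restricts to the dense torus: it writes down the vanishing ideal of $\Delta\cap T_Z$ inside $\mathcal O(T_Z)$, pulls this back along the comorphism $p_Z^*$ to the Laurent polynomial ring, observes that the pulled-back generators are exactly (monomial multiples of) the $2\times 2$ minors, and then invokes a saturation remark (a prime ideal containing no variable equals its extension--contraction to the Laurent ring) to pass from the torus to the total coordinate space. You instead work directly on the characteristic space $\hat Z$: you identify $p_Z^{-1}(\Delta)$ set-theoretically as the rank-$\le 1$ locus intersected with the open set where no column vanishes, and then take the closure in $\overline Z$, which recovers the full determinantal variety because the latter is irreducible. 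Your reduction of part~(i) to a single factor via the product decomposition and the disjoint-variable primality statement is also absent from the paper, which handles all $r$ simultaneously. Your approach is more geometric and arguably more transparent; the paper's torus method is more algorithmic and fits the general machinery it develops elsewhere. One small point worth making explicit in your write-up is why the closure of the open piece of the rank-$\le 1$ locus is the full locus (irreducibility suffices), since that is exactly where the passage from $\hat Z$ to $\overline Z$ happens.
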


The idea of the proof is to execute the computations on the respective tori. For future reference let us make the following remark.
\begin{remark}
\label{rem:prime_saturation}
 Let $\iota\colon (\KK^*)^n\to \KK^n$ be the canonical open embedding and $\iota^*$ its comorphism. If $I\subseteq\KK[T_1,\ldots,T_n]$ a prime ideal not containing any of the variables $T_i$, then $(\iota^*)^{-1}\langle \iota^*(I)\rangle =I$ holds.
\end{remark}

\begin{proof}[Proof of Lemma~\ref{lem:vanishing_ideals}]
Let $p_Z\colon \hat Z\to Z$ be the Cox construction of a toric variety $Z$ and $\overline Z$ its total coordinate space. We view the toric morphism $p_Z$ as a mophism $T_{\hat Z}\to T_Z$ of the openly embedded dense tori. Moreover, we denote by $\Delta\subseteq Z$ a subvariety with $\Delta=\overline {\Delta\cap T_Z}$ and write $\iota'\colon \hat Z\to\overline Z$ and $\iota\colon T_{\hat Z}\to \overline Z$ for the canonical open embeddings.
\[
 \xymatrix{
 {T_{\hat Z}}
 \ar@/^0.6cm/[rr]^{\iota}
 \ar[r]
 \ar[d]^{p_Z}
 &
 {\hat Z}
 \ar[d]^{p_Z}
 \ar[r]^{\iota'}
 &
 {\overline Z}
 \\
 T_Z
 \ar[r]
 &
 Z
 &
 \Delta
 \ar@{}[l]|{\supseteq}
 }
\]
Let $\mathfrak d\subseteq\mathcal O(T_Z)$ be the vanishing ideal of $\Delta\cap T_Z$. For the vanishing ideal of $\Delta$ in the Cox ring we obtain 
\[
 I(\iota'(p_Z^{-1}(\Delta)))
 \ =\ 
 I(\overline{\iota(p_Z^{-1}(\Delta\cap T_Z))})
 \ =\ 
 \sqrt{(\iota^*)^{-1}(p_Z^*\mathfrak d)}.
\]
We turn to i) and label the coordinates of 
 \[
   T_X\ =\ \left((\KK^*)^{n_1}\times(\KK^*)^{n_1}\right) \times\ldots\times \left((\KK^*)^{n_\mathbf r}\times(\KK^*)^{n_\mathbf r}\right)
  \]
by $_rU_{ij}$ where $r=1,\ldots,\mathbf r$, $i=1,\ldots,n_r$ and $j=n_r+1,n_r+2$. Then the comorphism $p_X^*$ of the corresponding Laurent polynomial rings is given as
\[
 p_X^*\colon\ \KK[_rU^\pm_{ij}]\quad \to\quad \KK[_rT^\pm_{ij}];
 \qquad
 _rU_{ij}\quad\mapsto\quad _rT_{ij}\ _rT_{0j}^{-1}.
\]
The vanishing ideal of $\Delta_X\cap T_X$ is generated by
\[
 _rU_{i,n_r+1}\ -\ _rU_{i,n_r+2}\qquad\text{where}\quad r=1,\ldots,\mathbf r,\text{ and }i=1,\ldots,n_r.
\]
Note that for any $r=1,\ldots,\mathbf r$ and $i,i'=1,\ldots,n_r$ this ideal also contains the elements 
\[
 _rU_{i,n_r+1}\ _rU_{i',n_r+1}^{-1}\ -\ _rU_{i,n_r+2}\ _rU_{i',n_r+2}^{-1}.
\]
Pulling back all these equation via $p_X^*$ yields the ideal $\iota_X^*(\mathfrak a_X)$ in the Laurent polynomial ring $\mathcal O(T_{\hat X})$.
Since $\mathfrak a_X$ is an ideal of $2\times2$-minors, it is prime (in fact, it is the vanishing ideal of the Segre embedding). Hence Remark~\ref{rem:prime_saturation} gives our assertion.

 We turn to ii) and proceed analogously. Here the coordinates of the dense torus $T_Y=(\KK^*)^n$ will be labeled $U_i$ with $i=1,\ldots, n$. The comorphism $p_Y^*$ is given by
 \[
 p_Y^*\colon\ \KK[U^\pm_i]\quad \to\quad \KK[T^\pm_{ij}];
 \qquad
 U_i\quad\mapsto\quad T_{i,n+1}\ T_{i,n+2}^{-1}.
\]
In $\mathcal O(T_Y)$ the ideal of $\Delta_Y\cap T_Y$ is generated by the relations $U_i-U_j$ for $1\le i<j\le n$. Pulling them back via $p_Y^*$ yields the ideal $\iota^*\mathfrak a_Y$ and the same argument as in i) yields the assertion.
\end{proof}

We denote the functions from Lemma~\ref{lem:vanishing_ideals} by $_rf_{ij}\in\mathcal O(\overline X)$ and $g_{ij}\in\mathcal O(\overline Y)$ where $r$ corresponds to the $r$-th matrix and in both cases $i,j$ define the respective columns.
These functions $_rf_{ij}$ and $g_{ij}$ give rise to the stretched embeddings
\begin{align*}
 \overline{\pi}_X\colon\ 
 \bigoplus_{r=1}^\mathbf r\KK^{2(n_r+1)}
 \quad &\to\quad
 \bigoplus_{r=1}^\mathbf r\left(\KK^{2(n_r+1)}\oplus \KK^{{n_r+1}\choose 2}\right)\\
 (x_1,\ldots,x_\mathbf r)
 \quad &\mapsto\quad
 \left(\,(x_1,\ _1f_{ij}(x_1))\,,\,\ldots\,,\,(x_\mathbf r,\ _\mathbf rf_{ij}(x_\mathbf r))\,\right)\\
\\
 \overline{\pi}_Y\colon\ 
 \KK^{2n}
 \quad &\to\quad
 \KK^{2n}\oplus \KK^{n\choose 2}\\
 y
 \quad &\mapsto\quad
 (y,\ g_{ij}(y)).
\end{align*}
The vanishing ideals of the images are given by
\begin{align*}
 I_{X,1}
 \ &:=\ 
 \langle\,_rT_{ij}\ -\ _rf_{ij};~r=1,\ldots,\mathbf r,~0\le i<j\le n_r+2,\ i\le n_r\,\rangle,\\
  I_{Y,1}
 \ &:=\ 
 \langle\,S_{ij}\ -\ g_{ij};~1\le i<j\le n+2,\ i\le n\,\rangle.
\end{align*}
We denote by $\iota_{X,1}^*,\;\iota_{X,2}^*,\;\alpha_X^*$ and $\iota_{Y,1}^*,\;\iota_{Y,2}^*,\;\alpha_Y^*$ the respective morphisms from Construction~\ref{con:transfer_ideals}. The new Laurent polynomial rings are then given by
\[\KK[T_\infty^{\pm},\ _rT_{ij}^\pm;~r=1,\ldots,\mathbf r,~0\le i<j\le n_r+2,~i\le n_r],\]
\[\KK[S_{ij};~1\le i<j\le n+2],\]
where the additional variables are $T_\infty$ and $S_{n+1,n+2}$ respectively. We transfer the above ideals according to Construction~\ref{con:transfer_ideals}, i.e. we set 
\begin{align*}
  I'_{X,2}
  :&=
  \langle\,\alpha_X^*\ (\iota_{X,1}^*(I_{X,1}))\ \rangle\\
  &=
  \langle\,_rT_{ij}T_\infty\ -\ _rf_{ij};~r=1,\ldots,\mathbf r,~0\le i<j\le n_r+2,\ i\le n_r\,\rangle,\\\\
 I'_{Y,2}
  :&=
  \langle\,\alpha_Y^*\ (\iota_{Y,1}^*(I_{Y,1}))\ \rangle\\
  &=
  \langle\,S_{ij}S_{n+1,n+2}\ -\ g_{ij};~1\le i<j\le n+2,\ i\le n\,\rangle.  
\end{align*}

We first have to compute their preimages under $\iota^*_{X,2}$ and $\iota^*_{Y,2}$, then we then show that $T_\infty$ and $S_{n+1,n+2}$ define prime elements and divide non of the remaining variables. Since the resulting relations are very closely related to the Pl\"ucker relations, we introduce some new notation. For this let $0\le i,j,k,l\le n$ be distinct integers. Then we denote by $q(i,j,k,l)$ the corresponding Pl\"ucker relation; i.e. if $i<j<k<l$ holds, then we set
\[
 q(i,j,k,l)\quad :=\quad T_{ij}T_{kl}-T_{ik}T_{jl}+T_{il}T_{jk}\ \in\ \KK[T_{ij};~0\le i<j\le n].
\]

\begin{lemma}
\label{lem:more_plueckers}
 Let $0\le i_0,j_0\le n$ be distinct integers. In the Laurent polynomial ring $\KK[T_{ij}^\pm;~0\le i<j\le n]$ consider the ideal 
 \[
  I\ :=\ \big\langle\, q\,(i_0,j_0,k,l);\quad 0\le k,l\le n,\quad i_0,j_0,k,l\text{ pairwise distinct}\,\big\rangle.
 \]
Then for any pairwise distinct $0\le i,j,k,l\le n$ we have $q(i,j,k,l)\in I$.
\end{lemma}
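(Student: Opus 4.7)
The plan is to reduce an arbitrary Plücker relation $q(i,j,k,l)$ to the generators of $I$ by working modulo $I$ inside the Laurent ring, exploiting that $T_{i_0 j_0}$ is a unit. The key observation is that each generator $q(i_0,j_0,m,n)\in I$ yields the congruence
\[
  T_{i_0 j_0}\cdot T_{mn}\ \equiv\ T_{i_0 m}\,T_{j_0 n}\ -\ T_{i_0 n}\,T_{j_0 m}\pmod{I}
\]
for every pair $\{m,n\}$ disjoint from $\{i_0,j_0\}$, adopting antisymmetric conventions $T_{mn}:=-T_{nm}$ to handle reorderings uniformly. Since $T_{i_0 j_0}^{-1}$ is available, this lets me rewrite every such $T_{mn}$ as a rational expression in the two families $a_m:=T_{i_0 m}$ and $b_m:=T_{j_0 m}$ modulo $I$.

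Next I would split into three cases according to $|\{i,j,k,l\}\cap\{i_0,j_0\}|$. If the intersection equals $\{i_0,j_0\}$, then $q(i,j,k,l)$ agrees up to sign with a generator of $I$ and nothing is to show. In the remaining two cases I multiply $q(i,j,k,l)$ by $T_{i_0 j_0}^2$ (empty intersection) or by $T_{i_0 j_0}$ (single-element intersection), apply the substitution to every factor $T_{mn}$ whose index set is disjoint from $\{i_0,j_0\}$, and verify cancellation by direct expansion. In the empty case the resulting identity is precisely the classical $2\times 4$-Plücker relation
\[
  (a_i b_j - a_j b_i)(a_k b_l - a_l b_k)\ -\ (a_i b_k - a_k b_i)(a_j b_l - a_l b_j)\ +\ (a_i b_l - a_l b_i)(a_j b_k - a_k b_j)\ =\ 0,
\]
and in the single-element case (say $i_0=i$) it reduces to a three-term cancellation of the form $a_j(a_k b_l-a_l b_k)-a_k(a_j b_l-a_l b_j)+a_l(a_j b_k-a_k b_j)$, which vanishes in a single line.

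The hard part will be purely bookkeeping: the variables $T_{mn}$ are indexed by ordered pairs $m<n$, so each use of the basic congruence carries a sign determined by the relative positions of $i_0,j_0,m,n$. Once the antisymmetric extension $T_{mn}:=-T_{nm}$ is fixed and the congruence above is checked to hold sign-consistently for every ordering of the four indices, the cancellations in the two nontrivial cases are formal polynomial identities in the $a_m$ and $b_m$ with no further reference to $I$.
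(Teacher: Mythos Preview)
Your proposal is correct but takes a genuinely different route from the paper. The paper proves a single ``elimination'' identity
\[
  q(i,k,l,m)\ =\ \frac{T_{jk}}{T_{ij}}\,q(i,j,l,m)\ -\ \frac{T_{jl}}{T_{ij}}\,q(i,j,k,m)\ +\ \frac{T_{jm}}{T_{ij}}\,q(i,j,k,l),
\]
which removes one index from the common pair $\{i,j\}$, and then applies it twice (first to drop $j_0$, then $i_0$) to reach an arbitrary four-tuple. You instead parametrize: the generators force $T_{mn}\equiv T_{i_0 j_0}^{-1}(a_m b_n-a_n b_m)$ modulo $I$ for $\{m,n\}$ disjoint from $\{i_0,j_0\}$, so every Pl\"ucker relation reduces to the classical quadratic identity among $2\times2$ minors of the $2\times(n{+}1)$ matrix with rows $(a_m)$ and $(b_m)$. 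Your approach is more conceptual and handles the disjoint case in one stroke without a two-step iteration, at the cost of the sign bookkeeping you flag; the paper's approach, on the other hand, produces an explicit Laurent syzygy expressing each $q(i,j,k,l)$ directly as a combination of at most six generators, which your argument yields only after unwinding the congruences.
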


\begin{proof}
We first claim that for distinct $0\le i,j,k,l,m\le n$ we have 
 \[
 (*)\quad q(i,j,k,l),\ q(i,j,k,m),\ q(i,j,l,m)\,\in\, I
 \quad \Longrightarrow\quad
 q(i,k,l,m)\,\in\, I.
 \]
 For this we assume without loss of generality that $i<j<k<l<m$ holds. The claim then follows from the relation
 \[
  q(i,k,l,m)
  \ =\ 
  \frac{T_{jk}}{T_{ij}}\,q(i,j,l,m)
  \ -\ 
  \frac{T_{jl}}{T_{ij}}\,q(i,j,k,m)
  \ +\ 
  \frac{T_{jm}}{T_{ij}}\,q(i,j,k,l)\ \in\ I.
 \]
Now consider distinct $0\le\alpha,\beta,\gamma,\delta\le n$. If $\{\alpha,\beta,\gamma,\delta\}\cap\{i_0,j_0\}\not=\emptyset$ holds, then $q(\alpha,\beta,\gamma,\delta)\in I$ follows from the above claim $(*)$. So assume that $\{\alpha,\beta,\gamma,\delta\}$ and $\{i_0,j_0\}$ are disjoint. Applying $(*)$ to the three collections of indices
\[
 i_0,j_0,\alpha,\beta,\gamma;
 \qquad\quad
 i_0,j_0,\alpha,\beta,\delta;
 \qquad\quad
 i_0,j_0,\alpha,\gamma,\delta
\]
shows that $q(i_0,\alpha,\beta,\gamma)$, $q(i_0,\alpha,\beta,\delta)$ and $q(i_0,\alpha,\gamma,\delta)$ lie in $I$. Another application of $(*)$ then proves $q(\alpha,\beta,\gamma,\delta)\in I$.
 \end{proof}

 We are now ready to prove Theorem~\ref{thm:main2}, for Theorem~\ref{thm:main1} we require some further preparations.
\begin{proof}[Proof of Theorem~\ref{thm:main2}]
 Using Lemma~\ref{lem:more_plueckers} we see that the ideals $\langle\, \iota^*_{Y,2}I_Y\, \rangle$ and $I_{Y,2}'$ coincide. Since $I_Y$ is prime from Remark~\ref{rem:prime_saturation} we infer that
 \[
  (\iota_{Y,2}^*)^{-1}I_{Y,2}'
  \ =\ 
  (\iota_{Y,2}^*)^{-1}\langle\, \iota^*_{Y,2} I_Y\, \rangle
  \ =\ 
  I_Y.
 \]
Since $I_Y$ is the ideal of Pl\"ucker relations, $S_{n+1,n+2}$ is prime and does not divide any of the remaining variables. We determine the grading of the Cox ring. The ring $\mathcal O(\overline Y)=\KK[S_{ij};~i=1,\ldots,n,~j=n+1,n+2]$ is $\ZZ^n$-graded by $\deg (S_{ij})=e_i$. Under the stretched embedding the new variables $S_{ij}$ where $1\le i< j\le n$ are assigned the degrees $\deg(S_{ij})=\deg(f_{ij})=e_i+e_j$. Finally, under the blow-up the weights are modified according to \ref{con:transfer_ideals} to give the asserted grading.
\end{proof}

We turn to the remaining case of $X=X'\times X'$.

\begin{lemma}
\label{lem:radprime_prime}
 Let $R:=\KK[T_{\infty},T_1,\ldots,T_n]$ be graded by $\ZZ_{\ge 0}$ and let $I\subseteq R$ be a homogeneous ideal. Suppose that $T_{\infty}\notin\sqrt I$ and $\deg (T_\infty)>0$ hold. If the ideals $I+\langle T_{\infty}\rangle$ and $\sqrt I$ are prime, then so is $I$.
\end{lemma}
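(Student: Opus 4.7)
The plan is to deduce that $I$ is prime by showing it is radical; then $I=\sqrt I$ together with the hypothesis that $\sqrt I$ is prime immediately gives the conclusion. Working in $A:=R/I$, the nilradical $\mathrm{nil}(A)$ equals $\sqrt I/I$, so primality of $\sqrt I$ translates into $\mathrm{nil}(A)$ being a prime ideal of $A$; the hypothesis $T_\infty\notin\sqrt I$ becomes $T_\infty\notin\mathrm{nil}(A)$; and primality of $I+\langle T_\infty\rangle$ says $A/(T_\infty)\cong R/(I+\langle T_\infty\rangle)$ is a domain. These are the three ingredients I intend to combine.

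Since $I$ is homogeneous, so is $\mathrm{nil}(A)$, and it suffices to show every homogeneous nilpotent $x\in A$ vanishes. I will argue by induction on the (non-negative) degree of $x$. The image of $x$ in the domain $A/(T_\infty)$ is nilpotent, hence zero, so I may write $x=T_\infty\cdot y$ with $y\in A$ homogeneous of degree $\deg x-\deg T_\infty$. Because $\deg T_\infty>0$, the element $y$ lives in strictly smaller degree. From $x=T_\infty y\in\mathrm{nil}(A)$, the primality of $\mathrm{nil}(A)$ and the fact $T_\infty\notin\mathrm{nil}(A)$ force $y\in\mathrm{nil}(A)$, i.e.\ $y$ itself is nilpotent. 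The induction hypothesis gives $y=0$, whence $x=0$. The base case is automatic: for $\deg x$ less than $\deg T_\infty$ (in particular for $\deg x=0$), the equation $x=T_\infty y$ forces $y$ into a negative graded component, which is empty, so $x=0$ directly.

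I do not anticipate a genuine obstacle here; the argument is essentially a graded descent driven by positivity of $\deg T_\infty$. The only point that requires a moment of care is verifying the three reformulations at the start — primality of $\mathrm{nil}(A)$, reducedness (in fact integrality) of $A/(T_\infty)$, and $T_\infty\notin\mathrm{nil}(A)$ — after which the induction writes itself and the radicality of $I$, hence its primality, follows.
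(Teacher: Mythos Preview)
Your argument is correct and follows essentially the same route as the paper: both reduce to showing $\sqrt I/I=0$ by first observing that any element of $\sqrt I$ lies in $I+T_\infty\sqrt I$ (your ``$x=T_\infty y$ with $y$ nilpotent'' step), and then using positivity of $\deg T_\infty$ to force $\sqrt I/I=0$. The only cosmetic difference is that the paper packages the final descent as an appeal to the graded Nakayama lemma, whereas you unfold that lemma explicitly as an induction on degree.
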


\begin{proof}
Compare also \cite[Proof~of~Theorem~1]{collineations}. Since $I+\langle T_{\infty}\rangle$ is a radical ideal, we have $\sqrt I\subseteq I+\langle T_{\infty}\rangle$. With this we obtain
 \[
  \sqrt I
  \quad =\quad
  (I+\langle T_{\infty}\rangle) \cap \sqrt I
  \quad =\quad 
  I+\langle T_{\infty}\rangle \sqrt I.
 \]
Note that for the second equality we used that $\sqrt I$ is prime and $T_{\infty}\notin \sqrt I$ holds. Let $\pi\colon R\to R/I$ denote the canonical projection of $\ZZ_{\ge 0}$-graded algebras. Then we have $\pi(\sqrt I) =\pi(\langle T_\infty\rangle\,\sqrt I)$ and $\deg (\pi(T_\infty))>0$. The assertion follows from the graded version of Nakayama's Lemma.
\end{proof}

\begin{lemma}[{\cite[Proposition~4]{collineations}}]
\label{lem:Tinfty_prime}
 Let $1\le c\le n$ be an integer. Then in the polynomial ring $\KK[T_{ij};~0\le i< j\le n+2]$ the following relations generate a prime ideal
 \begin{align*}
  -T_{ik}T_{jk}+T_{il}T_{jk};
  &\quad 
  0\le i<j\le c< k<l\le n+2,\\
  T_{ij}T_{kl}-T_{ik}T_{jk}+T_{il}T_{jk};
  &\quad
  0\le i<j\phantom{\le c }\,\,< k<l\le n+2\quad \text{different from above.}
 \end{align*}
\end{lemma}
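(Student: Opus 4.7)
The plan is to show that $I$ is prime by first identifying $V(I)$ with an irreducible, explicitly parametrised variety, and then verifying that $I$ is radical; primality then follows.

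For the geometric identification, I would partition the variables into three blocks: the ``small--small'' $T_{ij}$ with $i<j\le c$, the ``small--big'' $T_{ij}$ with $i\le c<j$, and the ``big--big'' $T_{ij}$ with $c<i<j$. The type~1 relations $-T_{ik}T_{jl}+T_{il}T_{jk}$ are exactly the $2\times 2$-minors of the $(c+1)\times(n+2-c)$-matrix $M:=(T_{ik})_{0\le i\le c<k\le n+2}$, so they impose $\mathrm{rk}\,M\le 1$ and on a dense open subset we may write $T_{ik}=a_ib_k$. Substituting this into the type~2 Plücker relations, a direct calculation shows that they decouple cleanly: the small--small Plücker coordinates must define a $2$-plane $W_{\mathrm s}\in\mathrm{Gr}(2,c+1)$ containing the vector $(a_0,\ldots,a_c)$, while the big--big coordinates must define a $2$-plane $W_{\mathrm b}\in\mathrm{Gr}(2,n+2-c)$ containing $(b_{c+1},\ldots,b_{n+2})$. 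Consequently, $V(I)$ is the image under an explicit polynomial map of the irreducible variety $\mathcal U_{\mathrm s}\times \mathcal U_{\mathrm b}$, where $\mathcal U_\bullet$ denotes the total space of the tautological rank-$2$ bundle over the corresponding Grassmannian, and in particular $V(I)$ is irreducible.

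To upgrade irreducibility to primality of $I$ itself, I would produce a Gröbner basis of $I$ whose initial ideal is squarefree, adapting Sturmfels's classical argument that the Plücker ideal of $\mathrm{Gr}(2,n+3)$ admits a squarefree initial ideal with respect to a diagonal order. The generators of $I$ differ from the Plücker generators of $\mathrm{Gr}(2,n+3)$ only by the deletion of one monomial in the ``mixed'' relations (those with $j\le c<k$); with a monomial order chosen so that the deleted monomial is never the leading term, the initial ideal of $I$ should agree with the corresponding initial ideal of the Plücker ideal and hence remain squarefree. Combined with the irreducibility of $V(I)$, this would give $I=I(V(I))$ prime.

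The main obstacle is the Gröbner basis analysis in the second step. The geometric identification is essentially routine, but verifying that a single monomial order simultaneously handles the full Plücker relations of type~2 and the truncated ones of type~1, and that the resulting initial ideal is squarefree, requires combinatorial care because of the mixed nature of the generators. A conceivable alternative is to realise $V(I)$ as the $T_\infty\!=\!0$ fibre of a flat family over $\mathrm{Spec}\,\KK[T_\infty]$ whose generic fibre is the affine cone over $\mathrm{Gr}(2,n+3)$, and to transport reducedness from the generic fibre via a Cohen--Macaulayness argument; however, this reorganises rather than eliminates the combinatorial work.
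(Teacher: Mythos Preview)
The paper does not supply a proof of this lemma at all: it is quoted as \cite[Proposition~4]{collineations} and used as a black box in the proof of Theorem~\ref{thm:main1}. So there is no in-paper argument to compare your proposal against, and your outline has to stand on its own.

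Your geometric half is essentially correct. After fixing the evident typos in the displayed generators (each middle term $T_{ik}T_{jk}$ should read $T_{ik}T_{jl}$), the type~1 relations are precisely the $2\times 2$-minors of the small--big block, and on the rank-one locus $T_{ik}=a_ib_k$ the remaining Pl\"ucker relations do decouple into the incidence conditions you describe for the small--small and big--big blocks. This yields a surjective morphism from an irreducible source onto $V(I)$, hence irreducibility.

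The gap is in the radicality step. Your observation that under a diagonal order the deleted monomial $T_{ij}T_{kl}$ is never the leading term is correct, so the leading terms of your \emph{generators} agree with those of the standard Pl\"ucker generators. But that alone does not show that the initial \emph{ideal} of $I$ coincides with the (squarefree) initial ideal of the Pl\"ucker ideal: you must still verify that the given generators form a Gr\"obner basis of $I$. The usual S-pair reductions for the Pl\"ucker ideal of $\mathrm{Gr}(2,n+3)$ make essential use of all three terms in each relation, and once some relations are truncated to two terms those reductions can fail and produce new elements. Until this Buchberger check (or an equivalent degeneration/Cohen--Macaulay argument) is actually carried out, radicality---and hence primality---is not established. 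You correctly flag this as the main obstacle; as written, the proposal is a plausible strategy rather than a proof.
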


\begin{proof}[Proof of Theorem~\ref{thm:main2}]
First we claim that the ideal $I_X$ is prime. For this note that the ideal $\langle T_\infty\rangle+I_X$ is generated by $T_\infty$ and the equations
\begin{align*}
  -\ {_rT_{ik}}\ {_rT_{jk}}\ +\  {_rT_{il}}\ {_rT_{jk}};
  &\quad 
  0\le i<j\le n_r,\quad k=n_r+1,\ l= n_r+2,\\
   {_rT_{ij}}\ {_rT_{kl}}\ -\ {_rT_{ik}}\ {_rT_{jk}}\ +\ {_rT_{il}}\ {_rT_{jk}};
  &\quad
  0\le i<j< k<l\le n+2\quad \text{diff. f. above}
 \end{align*}
where $r=1,\ldots,\mathbf r$. From Lemma~\ref{lem:Tinfty_prime} we infer that $\langle T_\infty\rangle+I_X$ is prime; we check the remaining assumptions of Lemma~\ref{lem:radprime_prime}. Consider the classical grading of $R_X$, then $I_X$ is homogeneous and $\deg T_\infty >0$ holds. We only have to verify that $V(I_X)$ is irreducible. For this
recall that we transferred the ideal $I_{X,1}$ via
\[
 I_{X,1}'\ =\ \langle\, \iota_{X,1}^*I_{X,1}\,\rangle
 \qquad\text{and}\qquad
 I_{X,2}'\ =\ \langle\, \alpha_X^*I_{X,1}'\,\rangle.
\]
Treating the index $\infty$ as $n_r+1,n_r+2$ in Lemma~\ref{lem:more_plueckers} we see that the latter ideal is given by $I'_{X,2}=\langle\iota^*_{X,2}(I_X)\rangle$. We track the respective zero sets.
\[
 V(I_X)
 \ =\ 
 \overline{V(I'_{X,2})}
 \ =\ 
 \overline{\alpha_X^{-1} V(I'_{X,1})}
 \ =\ 
 \overline{\alpha_X^{-1}(\iota_{X,1}^{-1}(V(I_{X,1}))}
\]
Since $\alpha_X$ has connected kernel and $V(I_{X,1})$ is the graph of $\overline X$ and as such irreducible, so is $V(I_X)$. This then implies that $I_X$ is prime.

By Remark~\ref{rem:prime_saturation} this means that $I_{X}=(\iota_{X,2}^*)^{-1}\langle\iota_{X,2}^*I_X\rangle=(\iota_{X,2}^*)^{-1}(I'_{X,2})$ holds. By Proposition~\ref{pro:cox_crit} the only thing left to verify is that $T_\infty$ does not divide any of the remaining variables. For this we compute the grading of the Cox ring; for reasons of degree it is then impossible for $T_\infty$ to divide any other variable. The $\ZZ^\mathbf r\times\ZZ^\mathbf r$-grading of 
\[
\mathcal O(\overline X)=\KK[_rT_{ij};~r=1,\ldots,\mathbf r,~i=0,\ldots,n_r,~j=n_r+1,n_r+2]
\]
is given by
\[
 \deg( _rT_{ij})\ =\ \begin{cases}
                      (e_r,0) & \text{if } j=n_r+1,\\
                      (0,e_r) & \text{if } j=n_r+2.
                     \end{cases}
\]
When stretching the embedding we add for every $r=1,\ldots,\mathbf r$ the variables $_rT_{ij}$ where $0\le i<j\le n_r$. These are assigned the degrees $\deg(_rT_{ij})=\deg(_rg_{ij})=(e_r,e_r)$. Finally under the blow-up the degrees are modified according to Construction~\ref{con:transfer_ideals} to give the asserted grading.
\end{proof}

\bibliographystyle{abbrv}

\end{document}